\newtheorem{thm}{Theorem}[section]
\newtheorem{cor}[thm]{Corollary}
\newtheorem{lem}[thm]{Lemma}
\theoremstyle{definition}
\newtheorem{defin}[thm]{Definition}
\theoremstyle{remark}
\newtheorem{rem}[thm]{Remark}
\theoremstyle{problem}
\theoremstyle{question}
\newtheorem{quest}[thm]{Question}
\numberwithin{equation}{section}
\begin{document}

%%%%% To ease editing, for IMPAN journals add:

\baselineskip=17pt

%%%%%%%%%%%

%% In the running head, replace first names by initials
%% and give an abbreviation of the title.

\title[On structure of all real valued  sequences $\cdots$]{On structure of all real valued  sequences uniformly distributed in $[-1/2, 1/2]$
from the point of view of shyness
}

\author[Gogi R. Pantsulaia]{Gogi R. Pantsulaia}
\address{I.Vekua Institute of Applied Mathematics,
Tbilisi State University, 0143 Tbilisi, Georgian Republic.\newline
Department of Mathematics, Georgian Technical University,
0175 Tbilisi, Georgian Republic}
\email{g.pantsulaia@gtu.ge}

%\author[J. K. Nowak]{Jan Krzysztof Nowak}
%\address{Institute of Mathematics\\ Warsaw University\\
%02-097 Warszawa, Poland}
%\email{jk.nowak@mimuw.edu.pl}

\date{}

\begin{abstract} In the paper [Inter. J. Sci. Tech., {\bf 4(3)} (2013), 21--27], it was shown  that $\mu$-almost every element of $\mathbf{R}^{\infty}$
is uniformly distributed in $[-\frac{1}{2}, \frac{1}{2}]$, where $\mu$ denotes Yamasaki-Kharazishvili  measure in $\mathbf{R}^{\infty}$
for which $\mu([-\frac{1}{2},\frac{1}{2}]^{\infty})=1$. In the present paper the same set is studying from the point of view of shyness and it is demonstrated that it is shy  in $\mathbf{R}^{\infty}$. In Solovay model,  the set of all real valued  sequences uniformly distributed  modulo 1 in $[-\frac{1}{2}, \frac{1}{2}]$ is studied from the point of view of shyness and it is shown that  it is prevalent set in $\mathbf{R}^{\infty}$.
\end{abstract}

\subjclass[2010]{Primary 28A05, 28A20, 28D05; Secondary 28C10.}

\keywords{Uniformly distributed
sequence, infinite-dimensional Lebesgue measure, generator of shy sets.}

\maketitle

\section{Introduction}

It is well known that now days  the  theory of  uniform  distribution  has  many  interesting  applications  in various  branches of mathematics, such  are numbers theory, probability
theory, mathematical  statistics, functional analysis, topological algebra, and so on. Therefore, research of internal structure of all uniformly distributed sequences doesn't lose the relevance to this day. For example, in \cite{Pan2013} has been  studied the  set $D$ of all
real valued sequences uniformly distributed in $[-\frac{1}{2},\frac{1}{2}]$ in
terms of the Yamasaki-Kharazishvili   measure $\mu$ \cite{Yamasaki80}-\cite{Kharaz84} and has been demonstrated that $\mu$-almost every element of $\mathbf{R}^{\infty}$ is
uniformly distributed in $[-\frac{1}{2}, \frac{1}{2}]$.

The purpose of the present paper is to study structures of  $D$  and  $F$ from the point of view of shyness \cite{HSY92}, where
$F$ denote the set  of all real valued sequences uniformly distributed modulo $1$ in $[-\frac{1}{2}, \frac{1}{2}]$.

The rest of the manuscript is the following.

In Section 2 we consider some auxiliary notions and facts from  mathematical analysis
 and measure theory. In Section 3 we prove that $D$ is shy in $R^{\mathbb{N}}$.
In Section 3, we demonstrate that in Solovay Model \cite{Solovay1970}  the set $F$ is prevalent set \cite{HSY92}  in $R^{\mathbb{N}}$.

\section{Some auxiliary notions and facts from  mathematical analysis
 and measure theory}

Let consider some notions and auxiliary facts from  mathematical analysis
 and measure theory which  will be  useful  for our further investigations.
\begin{defin}
A sequence of real
numbers $(x_k)_{k \in \mathbb{N}} \in \mathbf{R}^{\infty}$ is called
uniformly distributed in $[a,b]$( abbreviated u.d. in $[a,b]$) if for each $c,d$ with $a \le c
<d \le b$  we have
\begin{align}
\lim_{n \to \infty}\frac{\#(\{ x_k : 1 \le k \le n\} \cap
[c,d])}{n}=\frac{d-c}{b-a},
\end{align}
where $\#(\cdot)$ denotes the counter measure of a set.
\end{defin}

Let $\lambda$ be the Lebesgue measure on $[0,1]$. This measure induces the product measure $\lambda_{\infty}$  in $[0,1]^{\infty}$.

\begin{lem} (\cite{KuiNie74}, Theorem 2.2, p. 183)~Let $S$ be the set of all sequences u.d. in $[0,1]$, viewed as
a subset of $\mathbf{R}^{\infty}$. Then $\lambda_{\infty}(S \cap [0,1]^{\infty}) = 1$.
\end{lem}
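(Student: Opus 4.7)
The plan is to use the fact that under $\lambda_\infty$ the coordinate projections $X_k(\omega)=\omega_k$ on $[0,1]^\infty$ form an i.i.d. sequence of uniform random variables on $[0,1]$, and then apply the strong law of large numbers together with a countable reduction of the uniform distribution condition.

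First I would reduce membership in $S$ to a countable family of conditions. Specifically, for a sequence $(x_k) \in [0,1]^\infty$, I claim it is u.d. in $[0,1]$ if and only if for every pair of rationals $0 \le c < d \le 1$,
\begin{align}
\lim_{n \to \infty}\frac{\#(\{x_k : 1 \le k \le n\} \cap [c,d])}{n} = d-c.
\end{align}
The forward direction is immediate from the definition. For the reverse direction, given an arbitrary subinterval $[c',d'] \subseteq [0,1]$, one sandwiches it between nested intervals with rational endpoints $[c_1,d_1] \subseteq [c',d'] \subseteq [c_2,d_2]$ of lengths arbitrarily close to $d'-c'$, uses monotonicity of the counting function, and passes to the limit.

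Next, for each fixed rational pair $(c,d)$, the random variables $\mathbf{1}_{[c,d]}(X_k)$ are i.i.d. Bernoulli under $\lambda_\infty$ with common mean $\lambda([c,d]) = d-c$. The classical strong law of large numbers then yields
\begin{align}
\lambda_\infty\Bigl(\Bigl\{(x_k) \in [0,1]^\infty : \lim_{n\to\infty}\tfrac{1}{n}\sum_{k=1}^n \mathbf{1}_{[c,d]}(x_k) = d-c\Bigr\}\Bigr) = 1.
\end{align}
Let $N_{c,d}$ denote the complementary $\lambda_\infty$-null set. The countable union $N := \bigcup_{(c,d) \in \mathbb{Q}^2,\ 0 \le c<d\le 1} N_{c,d}$ is still $\lambda_\infty$-null, and by the reduction above, every point of $[0,1]^\infty \setminus N$ lies in $S$. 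Therefore $\lambda_\infty(S \cap [0,1]^\infty) = 1$.

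The only real obstacle is the reduction step to a countable family of intervals; this is standard but needs some care at the endpoints, since $\mathbf{1}_{[c,d]}$ is not continuous and one cannot blindly apply a Portmanteau-type argument. The monotonicity trick with nested rational intervals handles it cleanly. An alternative route, which bypasses this entirely, is to invoke Weyl's criterion and apply the SLLN to the i.i.d. bounded random variables $e^{2\pi i h X_k}$ for each nonzero integer $h$, whose mean vanishes; intersecting the countably many resulting full-measure sets again gives the conclusion.
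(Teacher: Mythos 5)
Your argument is correct. The paper itself offers no proof of this lemma --- it is quoted verbatim from Kuipers--Niederreiter (Theorem~2.2, p.~183) --- so there is nothing to compare against line by line; but your route (reduce the u.d.\ condition to countably many rational-endpoint intervals via the monotone sandwiching of counting functions, apply the strong law of large numbers to the i.i.d.\ indicators $\mathbf{1}_{[c,d]}(X_k)$ with mean $d-c$, and discard a countable union of null sets) is precisely the standard proof of this classical fact, and your handling of the endpoint/discontinuity issue is adequate. The alternative you sketch via Weyl's criterion and the variables $e^{2\pi i h X_k}$ is likewise sound and is essentially the form of the argument found in the cited source.
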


Let $~{V}$ be a complete metric linear space, by which we
mean a vector space (real or complex) with a complete metric for
which the operations of addition and scalar multiplication are continuous. When we
speak of a measure on $~{V}$ we will always mean a
nonnegative measure that is defined  on the Borel sets of
$~{V} $ and is not identically zero. We write $S + v$ for
the translation of a set $S \subseteq ~{V} $ by a vector $v
\in ~{V} $.

\begin{defin}[ \cite{HSY92}, Definition 1, p. 221]  A measure $\mu$ is said to be transverse to a Borel
 set $S \subset ~{V} $ if the
following two conditions hold:
\begin{enumerate}[\upshape (i)]
\item There exists a compact set $U \subset ~{V} $ for which
$0 < \mu(U) <1$;
\item $\mu(S + v) = 0$ for every $v \in ~{V} $.
\end{enumerate}
\end{defin}

\begin{defin}[ \cite{HSY92}, Definition 2, p. 222]
A Borel set $S \subset ~{V} $ is called shy  if there exists
a measure transverse to $S$. More generally, a subset of
$~{V} $ is called shy if it is contained in a Borel shy set.
The complement of a shy set is called a prevalent set.
\end{defin}

\begin{defin}( \cite{Pan08-1}, Definition 2.4,  p.237)~A Borel measure $\mu$ in $V$ is called a
generator of shy sets in $V,$ if
\begin{align}
(\forall X)(\overline{\mu}(X)
= 0  \rightarrow  X \in S(V )),
\end{align} where $\overline{\mu}$ denotes a
usual completion of the Borel measure $\mu$, where  $S(V)$ denotes the $\sigma$-ideal of all shy sets in $V$.
\end{defin}

\begin{lem}( \cite{Pan08-1}, Theorem 2.4, p. 241) {\it  Every
quasi-finite\footnote{A measure $\mu$ is called quasi-finite if there exists a $\mu$-measurable set $A$ for which $0<\mu(A)<+\infty$.} translation-quasi-invariant \footnote{A Borel measure $\mu$
defined in a Polish topological vector space $V$ is called translation-quasi-invariant if for each $\mu$-measurable set $A$ and any $h \in V$, the following  conditions $\mu(A)=$ and $\mu(A+h)=0$ are equivalent. } Borel measure $\mu$
defined in a Polish topological vector space $V$ is a generator of shy sets.}
\end{lem}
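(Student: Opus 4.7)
The plan is to show directly that every subset $X \subseteq V$ with $\overline{\mu}(X) = 0$ is shy. Since $\overline{\mu}(X) = 0$, fix a Borel set $B \supseteq X$ with $\mu(B) = 0$; it then suffices to exhibit a Borel measure $\nu$ on $V$ transverse to $B$, i.e.\ one satisfying $0 < \nu(U) < 1$ for some compact $U \subset V$ and $\nu(B+v) = 0$ for every $v \in V$. The natural candidate is a normalized restriction of $\mu$ itself to a compact set of positive finite measure.

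To produce the required compact set I would first invoke quasi-finiteness to fix a Borel $A$ with $0 < \mu(A) < +\infty$. The restriction $\mu_{A}(E) := \mu(E \cap A)$ is then a finite Borel measure on the Polish space $V$, so by Ulam's theorem it is inner regular with respect to compact subsets of $V$. Applying inner regularity to $A$ itself yields a compact set $K \subseteq A$ with
\[
\mu(K) > \tfrac{1}{2}\mu(A) > 0, \qquad \mu(K) \le \mu(A) < +\infty.
\]

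Now set $\nu(E) := \mu(E \cap K)/(2\mu(K))$ for each Borel $E \subseteq V$. This is a finite Borel measure on $V$ with $\nu(K) = 1/2$, so condition (i) of transversality holds with $U = K$. For condition (ii), translation-quasi-invariance of $\mu$ combined with $\mu(B) = 0$ forces $\mu(B+v) = 0$ for every $v \in V$, whence
\[
\nu(B+v) \;=\; \frac{\mu((B+v) \cap K)}{2\mu(K)} \;\le\; \frac{\mu(B+v)}{2\mu(K)} \;=\; 0.
\]
Thus $\nu$ is transverse to $B$ and consequently $B$, together with its subset $X$, lies in $S(V)$.

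The only step that requires real care is the extraction of the compact witness $K$ with $0 < \mu(K) < +\infty$, because $\mu$ need not be $\sigma$-finite, so one cannot apply a global tightness argument; localizing first to the finite-measure set $A$ furnished by quasi-finiteness and only then invoking Ulam-type inner regularity on the Polish space $V$ is what resolves this. Apart from that, translation-quasi-invariance — rather than full translation-invariance — is precisely the hypothesis needed to propagate the $\mu$-nullity of $B$ to every translate $B+v$, which is exactly condition (ii) for $\nu$.
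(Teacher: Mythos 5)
The paper gives no proof of this lemma---it is quoted verbatim from \cite{Pan08-1}---so there is no internal argument to compare against; your proof is correct and is essentially the standard one: localize via quasi-finiteness, extract a compact witness by Ulam tightness of the finite restricted measure on the Polish space, normalize so the compact set has $\nu$-measure $\tfrac{1}{2}$, and use translation-quasi-invariance to annihilate every translate of the Borel null hull of $X$. The delicate points (avoiding any global $\sigma$-finiteness or tightness assumption on $\mu$, securing the strict inequality $\nu(U)<1$ required by the transversality definition, and passing from $\overline{\mu}(X)=0$ to a Borel hull $B$) are all handled correctly.
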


The key ingredient for our investigation will be  well known lemma from the probability theory.

\begin{lem}(Borel-Canteli lemma)(\cite{Shiryaev2012}, p.271)  Let $(\Omega,{\bf F},P)$ be a probability space. Let $(E_n)_{n \in N}$ be a sequence of events such that
\begin{align}\sum_{n=1}^\infty P(E_n)<\infty.
\end{align}
Then the probability that infinitely many of them occur is 0, that is,
\begin{align}
P\left(\limsup_{n\to\infty} E_n\right) = 0.
\end{align}
Here, "$\limsup_{n\to\infty}$" denotes limit supremum of the sequence of events $(E_n)_{n \in N}$ which is defined by
\begin{align}
\limsup_{n\to\infty} E_n = \bigcap_{n=1}^{\infty} \bigcup_{k=n}^{\infty} E_k.
\end{align}
\end{lem}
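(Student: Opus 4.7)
The plan is to bound $P(\limsup_n E_n)$ directly by exploiting continuity from above of the probability measure together with countable subadditivity, using the fact that the tail of a convergent series tends to zero.

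First, I would introduce the tail unions $A_n = \bigcup_{k=n}^{\infty} E_k$. By the definition given in the statement, $\limsup_n E_n = \bigcap_{n=1}^{\infty} A_n$, and the sequence $(A_n)$ is monotonically decreasing: $A_1 \supseteq A_2 \supseteq \cdots$. Since $P(A_1) \leq 1 < \infty$, continuity of $P$ from above applies and gives
\[
P\bigl(\limsup_{n\to\infty} E_n\bigr) \;=\; \lim_{n\to\infty} P(A_n).
\]

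Next, I would estimate $P(A_n)$ by countable subadditivity:
\[
P(A_n) \;=\; P\Bigl(\bigcup_{k=n}^{\infty} E_k\Bigr) \;\leq\; \sum_{k=n}^{\infty} P(E_k).
\]
The right-hand side is the $n$-th tail of the convergent series $\sum_{k=1}^{\infty} P(E_k)$, so it tends to $0$ as $n\to\infty$. Combining the two displays gives $P(\limsup_n E_n) = 0$, which is the required conclusion.

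The main obstacle here is essentially non-existent; the argument is a direct application of two standard properties of probability measures together with the elementary fact that the tail of a convergent series vanishes. The only technical point worth verifying is that $(A_n)$ genuinely decreases to $\limsup_n E_n$, which is immediate from the set-theoretic formula $\limsup_n E_n = \bigcap_{n=1}^{\infty}\bigcup_{k=n}^{\infty} E_k$ recorded in the statement of the lemma.
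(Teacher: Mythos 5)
Your proof is correct and complete: the paper itself gives no proof of this lemma (it is quoted from the cited reference), and your argument via continuity from above applied to the decreasing tail unions $A_n=\bigcup_{k\ge n}E_k$ together with countable subadditivity and the vanishing of the tail of a convergent series is exactly the standard proof of the first Borel--Cantelli lemma. Nothing further is needed.
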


Below we present a certain example of translation-invariant Borel measure in $R^N$ in the Solovay Model (SM) \cite{Solovay1970}  which is
the following system of axioms:
\begin{align}
(ZF)\&(DC)\&(\mbox{every subset of}~{\bf{R}}~\mbox{is measurable
in the Lebesgue sense}),
\end{align}
where $(ZF)$ denotes the  Zermelo-Fraenkel set theory and $(DC)$
denotes the axiom of Dependent Choices.

\begin{lem}( \cite{Pan04-2},  Corollary 1, p. 64 ) (SM)  Let $\Bbb{J}$ be any non-empty subset of
the set all natural numbers $\Bbb{N}$.  Let, for $k \in
\Bbb{J}$,~$ S_k$ be the unit circle in the Euclidean plane
${\bf{R}}^2$. We may identify
 the circle $S_k$ with a compact group of all rotations of ${\bf{R}}^2$~ about its origin. Let $\lambda_{\Bbb{J}}$ be the probability Haar measure  defined on the compact group
$\prod \limits_{k \in {\Bbb{J}}}S_k$. Then the
completion $\overline{\lambda_{\Bbb{J}}}$ of $\lambda_{\Bbb{J}}$
is defined on the power set of $\prod \limits_{k \in
\Bbb{J}}S_k.$
\end{lem}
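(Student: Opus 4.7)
The plan is to transfer the (SM) axiom — every subset of $\mathbb{R}$ is Lebesgue measurable — from the line to the compact group $P:=\prod_{k\in\mathbb{J}}S_k$ via a measure-preserving Borel isomorphism. Since $\mathbb{J}\subseteq\mathbb{N}$ is at most countable, $P$ is a compact Polish group and $\lambda_{\mathbb{J}}$ is a Borel probability measure on it, so standard descriptive set theory applies.

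First I would identify each rotation circle $S_k$ with $\mathbb{R}/\mathbb{Z}$ and then with the Borel set $[0,1)$, under which the normalized Haar measure on $S_k$ corresponds to Lebesgue measure $\lambda$ on $[0,1)$. Taking products, $(P,\lambda_{\mathbb{J}})$ becomes Borel-isomorphic as a measure space to $([0,1)^{\mathbb{J}},\lambda^{\otimes \mathbb{J}})$. This reduces the problem to showing that every subset of $[0,1)^{\mathbb{J}}$ lies in $\mathrm{dom}(\overline{\lambda^{\otimes \mathbb{J}}})$.

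Next I would construct a Borel isomorphism $\Phi\colon [0,1)\to[0,1)^{\mathbb{J}}$, defined off a Lebesgue-null Borel set, that pushes $\lambda$ forward to $\lambda^{\otimes \mathbb{J}}$. The standard construction uses non-terminating binary expansions to identify $[0,1)$ with $\{0,1\}^{\mathbb{N}}$ modulo a countable set, followed by a fixed bijection $\mathbb{N}\to\mathbb{J}\times\mathbb{N}$ that interleaves the digit streams into a $\mathbb{J}$-indexed array. This construction is explicit and uses only dependent choice, so it remains legitimate in (SM). The map $\Phi$ together with the coordinate identification above yields a Borel measure isomorphism from $(P,\lambda_{\mathbb{J}})$ to $([0,1),\lambda)$.

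Finally, by (SM) every subset of $[0,1)$ belongs to $\mathrm{dom}(\overline{\lambda})$. Because a measure-preserving Borel isomorphism takes Borel sets to Borel sets and null sets to null sets, it carries the domain of one completion bijectively onto the domain of the other, via the characterization of the completion as the $\sigma$-algebra of sets which differ from some Borel set by a subset of a null Borel set. Chaining the isomorphisms then puts every subset of $P$ into $\mathrm{dom}(\overline{\lambda_{\mathbb{J}}})$, which is precisely the claim. The main delicate point I anticipate is upgrading the isomorphism from the customary "modulo null sets" formulation to the entire power set: one must verify directly that $\Phi$ and $\Phi^{-1}$ send subsets of null Borel sets to subsets of null Borel sets, so that even the most pathological subsets (whose existence one cannot rule out in $\mathrm{ZF}+\mathrm{DC}$) transfer correctly between the two spaces.
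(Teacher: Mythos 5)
The paper does not prove this statement at all: it is imported verbatim from \cite{Pan04-2} (Corollary 1, p.~64) and used as a black box, so there is no internal proof to compare against. Your argument is a correct, self-contained derivation and is the standard route one would expect: identify each $S_k$ with $[0,1)$ carrying Lebesgue measure, reduce $\bigl(\prod_{k\in\mathbb{J}}S_k,\lambda_{\mathbb{J}}\bigr)$ to $\bigl([0,1)^{\mathbb{J}},\lambda^{\otimes\mathbb{J}}\bigr)$, and pull everything back to the line through the explicit digit-interleaving isomorphism, which needs nothing beyond $\mathrm{ZF}+\mathrm{DC}$. You also correctly isolate the only genuinely delicate point --- that the isomorphism must transfer \emph{arbitrary} subsets, not merely measurable-mod-null equivalence classes --- and your resolution is sound: since $\Phi$ is a genuine Borel isomorphism between co-null Borel sets, it sends Borel sets to Borel sets and null Borel sets to null Borel sets, hence subsets of null Borel sets to subsets of null Borel sets, and the exceptional null Borel set on which $\Phi$ is undefined is absorbed into the completion directly. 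I see no gap.
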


 For $k \in \Bbb{N}$, define the function $f_k$
by $ f_k(x)=\exp\{ 2 \pi x i\}$ for every $x \in {\bf{R}}$.

 For $E \subset {\bf{R}}^{\Bbb{N}}$ and $g \in \prod \limits
_{k \in \Bbb{N}}S_k$, put
 \begin{align}
 f_{E}(g)= \left\{
\begin{array}{cl}
\mbox{card}( (\prod\limits_{k \in \Bbb{N}}f_k)^{-1}(g) \cap E), & \mbox{if this is finite;}\\
+ \infty , & \mbox{in all other cases.}
\end{array} \right.
\end{align}

Define the functional $\mu_{\Bbb{N}}$ by
\begin{align}
(\forall E)(E \subset {\bf{R}}^{\Bbb{N}} \rightarrow
\mu_{\Bbb{N}}(E)= \int \limits_{\prod \limits_{k \in
\Bbb{N}}S_k}f_E(g)d \overline{\lambda_{\Bbb{N}}}(g)).
\end{align}

\begin{lem} ( \cite{Pan04-2},  Lemma 3, p. 65 )(SM) $\mu_{\Bbb{N}}$ is a translation-invariant measure defined on the
powerset ${\bf{R}}^{\Bbb{N}}$ such that $\mu_{\Bbb{N}}([0;1]^{\Bbb{N}})=1$.
\end{lem}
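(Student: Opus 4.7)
The plan is to verify three things in turn: that $\mu_{\Bbb{N}}$ is a well-defined countably additive set function on the entire powerset of ${\bf{R}}^{\Bbb{N}}$, that it is translation-invariant, and that $\mu_{\Bbb{N}}([0,1]^{\Bbb{N}}) = 1$. For the first point, Lemma 2.7 tells us that under (SM) the completion $\overline{\lambda_{\Bbb{N}}}$ is defined on \emph{every} subset of $\prod_{k\in\Bbb{N}} S_k$, so the $[0,+\infty]$-valued function $f_E$ is automatically $\overline{\lambda_{\Bbb{N}}}$-measurable and $\mu_{\Bbb{N}}(E)$ is well defined for each $E\subset {\bf{R}}^{\Bbb{N}}$. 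Given a pairwise disjoint sequence $(E_n)_n$ in ${\bf{R}}^{\Bbb{N}}$, the counting identity $f_{\bigsqcup_n E_n}(g) = \sum_n f_{E_n}(g)$ holds pointwise in $g$, and monotone convergence yields $\mu_{\Bbb{N}}(\bigsqcup_n E_n) = \sum_n \mu_{\Bbb{N}}(E_n)$.

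For translation-invariance, I would exploit that $\phi := \prod_k f_k$ is a group homomorphism from $({\bf{R}}^{\Bbb{N}},+)$ onto the compact group $\prod_k S_k$, since each $f_k(x)=\exp\{2\pi x i\}$ is a homomorphism from ${\bf{R}}$ to $S_k$. For $h\in {\bf{R}}^{\Bbb{N}}$ and $g \in \prod_k S_k$ one checks
\begin{align*}
\phi^{-1}(g)\cap(E+h) \;=\; \bigl(\phi^{-1}(g\,\phi(h)^{-1})\cap E\bigr) + h,
\end{align*}
so $f_{E+h}(g) = f_E(g\,\phi(h)^{-1})$. Substituting into the defining integral and invoking translation-invariance of the Haar measure $\overline{\lambda_{\Bbb{N}}}$ on $\prod_k S_k$ gives $\mu_{\Bbb{N}}(E+h)=\mu_{\Bbb{N}}(E)$.

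For the normalization, note that $f_k$ restricted to $[0,1)$ is a bijection onto $S_k$, while $f_k(0)=f_k(1)=1$. Hence for $g\in\prod_k S_k$ with $g_k\neq 1$ for every $k$, the fiber $\phi^{-1}(g)\cap[0,1]^{\Bbb{N}}$ is a singleton, so $f_{[0,1]^{\Bbb{N}}}(g)=1$ off the exceptional set $\bigcup_k\{g:g_k=1\}$; each factor slice $\{g:g_k=1\}$ is Haar-null in $\prod_k S_k$, and a countable union of null sets is null, so integration gives $\mu_{\Bbb{N}}([0,1]^{\Bbb{N}})=1$. The main obstacle I anticipate is really the measurability issue in the first step---without the Solovay hypothesis there is no reason for $f_E$ to be measurable for arbitrary $E$, which is precisely why the construction is carried out inside (SM); once that is granted, the remaining verifications reduce to the elementary homomorphism and bijection observations sketched above.
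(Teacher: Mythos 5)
The paper does not actually prove this lemma: it is imported verbatim from \cite{Pan04-2} (Lemma 3, p.~65) and used as a black box, so there is no in-paper argument to compare yours against. On its own terms your reconstruction is correct and covers exactly the three things that need checking. The well-definedness step is right for the reason you give: once $\overline{\lambda_{\Bbb{N}}}$ lives on the full power set of $\prod_k S_k$ (Lemma 2.7 under (SM)), every $[0,+\infty]$-valued function on that group is measurable, and countable additivity follows from $f_{\bigsqcup_n E_n}=\sum_n f_{E_n}$ plus monotone convergence; the only place to be slightly more careful is the case where the fiber meets $\bigsqcup_n E_n$ in an infinite set, where you should note that either one $E_n$ contributes $+\infty$ or infinitely many contribute at least $1$, so both sides are $+\infty$ in $[0,+\infty]$. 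The translation-invariance computation via the homomorphism identity $f_{E+h}(g)=f_E\bigl(g\,\phi(h)^{-1}\bigr)$ and the invariance of the (completed) Haar measure is the standard argument, and your normalization step correctly isolates the Haar-null exceptional set $\bigcup_k\{g: g_k=1\}$ on which the fiber over $[0,1]^{\Bbb{N}}$ fails to be a singleton. In short: a correct, self-contained proof of a statement the paper only cites.
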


\section{On structure of real valued sequences  uniformly distributed in $[-\frac{1}{2},\frac{1}{2}]$
from the point of view of shyness}

 Let $(x_k)_{k \in \mathbb{N}} \in D$. We set $J=\{k: x_k \notin [-\frac{1}{2},\frac{1}{2}]\}$. Note that for $J$ we must have
\begin{align}\lim_{n \to \infty} \frac{\# (J \cap [0,n])}{n}=0.
\end{align}
In other words, the density of $J$ must be equal to zero. Indeed, we have
\begin{align*}1=\lim_{n \to \infty} \frac{\# (\{x_1, \cdots, x_n\}\cap [-1/ 2,1/2])}{n}=
\end{align*}
\begin{align*}\lim_{n \to \infty} \frac{\# (\{x_k : k \in J\cap [0,n] \} \cup \{x_k : k \in (\mathbb{N}\setminus J)\cap [0,n]\} \cap [-1/ 2,1/ 2])}{n}=
\end{align*}
\begin{align*}\lim_{n \to \infty} \frac{\# (\{x_k : k \in J\cap [0,n] \} \cap [-1/ 2,1/ 2])}{n}+
\end{align*}
\begin{align*}
\lim_{n \to \infty} \frac{\# (\{x_k : k \in (\mathbb{N}\setminus J)\cap [0,n]\} \cap [-1/ 2,1/ 2])}{n}=
\end{align*}
\begin{align}\lim_{n \to \infty} \frac{\#((\mathbb{N} \setminus J) \cap [0,n])}{n}.
\end{align}
The latter relation means that the density of $\mathbb{N} \setminus J$ is equal to $1$ which implies that the density of $J$ is equal to zero.

Let $\mathbb{J}$ be the class of all subsets of $\mathbb{N}$ whose densities are equal to zero. Following above mentioned discussion we  conclude that \begin{align}D \subseteq \cup_{J \in \mathbb{J}}R^J \times [-1/ 2,1/ 2]^{\mathbb{N} \setminus J}.
\end{align}
It is not hard to show that $R^J \times [-1/ 2,1/ 2]^{\mathbb{N} \setminus J}$ is shy for each $J \in \mathbb{J}$. It can be showed as follows: for each $J \in \mathbb{J}$ we have that $[-1/ 2,1/ 2]^{\mathbb{N} \setminus J}$ is compact in $R^{\mathbb{N} \setminus J}$ because of infinity of the set $\mathbb{N} \setminus J$ which implies that $[-1/ 2,1/ 2]^{\mathbb{N} \setminus J}$ is shy in $R^{\mathbb{N} \setminus J}$ (see, \cite{HSY92}, Fact 8, p. 226).  Hence, a set $R^J \times [-1/ 2,1/ 2]^{\mathbb{N} \setminus J}$, as a product of two sets between of them at least one is shy, again is shy.
Note that cardinality of the class $\mathbb{J}$ is equal to $c$, where $c$ denotes the cardinality of the continuum. The latter relation follows from the fact that there is $J_0 \in \mathbb{J}$ for which $\mbox{card}(J_0)=\aleph_0$. Then each subset of  $J_0$ also belongs to the class $\mathbb{J}$ which gives a required result.

It is obvious that the class $\mathbb{J}$ admits the following representation $\mathbb{J}=\mathbb{J}_{finite} \cup \mathbb{J}_{infinite}$, where
$\mathbb{J}_{finite}$ and $\mathbb{J}_{infinite}$ denote those classes  of  elements of $\mathbb{J}$ which are finite and infinite, respectively. Note that
\begin{align*}\cup_{J \in \mathbb{J}}R^J \times [-1/ 2,1/ 2]^{\mathbb{N} \setminus J}=\cup_{J \in \mathbb{J}_{finite}}R^J \times [-1/ 2,1/ 2]^{\mathbb{N} \setminus J} \cup
\end{align*}
\begin{align}
\cup_{J \in \mathbb{J}_{infinite}}R^J \times [-1/ 2,1/ 2]^{\mathbb{N} \setminus J}
\end{align}

It can be shown that for Yamasaki-Kharazishvili measure $\mu$ we have
\begin{align}
\mu(\mathbf{R}^{\mathbb{N}} \setminus \cup_{J \in \mathbb{J}_{finite}}R^J \times [-1/ 2,1/ 2]^{\mathbb{N} \setminus J})=0.
\end{align}

There naturally arises the following  question.

\begin{quest}  Is the set $\cup_{J \in \mathbb{J}}R^J \times [-1/ 2,1/ 2]^{\mathbb{N} \setminus J}$  shy in $R^\mathbb{N}$?
\end{quest}

\begin{lem} The answer to Question 3.1 is yes.
\end{lem}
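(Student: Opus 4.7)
The plan is to bypass the continuum-sized family $\mathbb{J}$ by producing a single Borel probability measure on $\mathbb{R}^{\mathbb{N}}$ that is transverse to the entire union
\begin{align*}
S := \bigcup_{J \in \mathbb{J}} \mathbb{R}^J \times [-\tfrac12,\tfrac12]^{\mathbb{N} \setminus J}.
\end{align*}
The first step is the reformulation
\begin{align*}
S = \bigl\{(x_k) \in \mathbb{R}^{\mathbb{N}} : \{k \in \mathbb{N} : x_k \notin [-\tfrac12,\tfrac12]\} \text{ has asymptotic density } 0\bigr\},
\end{align*}
which holds because $\mathbb{J}$ is hereditary under inclusion; this description also exhibits $S$ as a Borel set, so Definition 2.3 applies.

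For the transverse measure I would take $\mu := \nu^{\otimes \mathbb{N}}$, where $\nu := \tfrac{1}{3}(\delta_{-1}+\delta_0+\delta_1)$. This is a Borel probability measure on $\mathbb{R}^{\mathbb{N}}$ supported on the compact cube $K := \{-1,0,1\}^{\mathbb{N}}$. The geometric observation driving everything is that, for every real $v_k$, the shifted support $\{-1-v_k,-v_k,1-v_k\}$ consists of three points spaced at distance exactly $1$ apart, and since $[-\tfrac12,\tfrac12]$ has length $1$, it can contain at most two of them. Condition (i) of Definition 2.3 is then satisfied by the compact cylinder $U := \{y \in K : y_1 = 0\}$, which has $\mu(U) = 1/3 \in (0,1)$.

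Condition (ii) is the main step. Fix $v = (v_k) \in \mathbb{R}^{\mathbb{N}}$ and set $A_k := \{y \in \mathbb{R}^{\mathbb{N}} : y_k - v_k \notin [-\tfrac12,\tfrac12]\}$; these events depend on disjoint coordinates and so are $\mu$-independent, while the geometric bound above yields $\mu(A_k) \ge 1/3$ uniformly in $k$ and $v_k$. I would then apply Lemma 2.7 (Borel--Cantelli) to the deviation events
\begin{align*}
B_n := \Bigl\{y : \tfrac{1}{n}\sum_{k=1}^n \mathbf{1}_{A_k}(y) < \tfrac{1}{6}\Bigr\},
\end{align*}
whose probabilities decay exponentially in $n$ by Hoeffding's inequality; since $\sum_n \mu(B_n) < \infty$, Borel--Cantelli gives $\tfrac{1}{n}\sum_{k=1}^n \mathbf{1}_{A_k}(y) \ge 1/6$ for all sufficiently large $n$, $\mu$-almost surely. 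For such $y$, the set $\{k : y_k - v_k \notin [-\tfrac12,\tfrac12]\}$ has lower density at least $1/6$, hence does not lie in $\mathbb{J}$, so $y - v \notin S$; equivalently, $\mu(S+v) = 0$.

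The main obstacle is calibrating $\nu$ so that the escape probability $\mu(A_k)$ remains bounded away from zero uniformly in the arbitrary shift $v_k$. A two-point support would not work because some shift could place both atoms inside $[-\tfrac12,\tfrac12]$, driving $\mu(A_k)$ to $0$; the three-point set $\{-1,0,1\}$ is exactly wide enough to force at least one atom to stay outside $[-\tfrac12,\tfrac12]$ for every $v_k$, and this uniform positive lower bound is what upgrades the Borel--Cantelli input into the positive-density conclusion that proves $\mu$ transverse to $S$ and so establishes shyness of $S$.
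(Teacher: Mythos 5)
Your proposal is correct, but it takes a genuinely different route from the paper's. The paper never works with the union $S=\bigcup_{J\in\mathbb{J}}\mathbb{R}^J\times[-\frac12,\frac12]^{\mathbb{N}\setminus J}$ directly: it embeds $S$ into the larger Borel set $\limsup_{n\to\infty}E_n$ of all sequences having \emph{infinitely many} coordinates in $[-\frac12,\frac12]$ (possible since $\mathbb{N}\setminus J$ is infinite for every density-zero $J$), and then kills that set with a product $\prod_n\gamma_{(0,\sigma_n)}$ of Gaussian measures whose variances grow so fast that every translate of the $n$-th coordinate slab has measure at most $2^{-n}$; the plain first Borel--Cantelli lemma then gives measure zero for every translate of $\limsup_n E_n$. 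You instead keep the exact description of $S$ as the set of sequences whose escape set $\{k:x_k\notin[-\frac12,\frac12]\}$ has density zero (your reformulation and the Borel-ness of $S$ are correct, since density zero is hereditary under inclusion), and you use a compactly supported product measure with a \emph{translation-uniform} escape probability $\geq 1/3$ per coordinate --- the three-point spacing argument is right, as no closed interval of length $1$ can contain both $-1$ and $1$ --- which you upgrade via Hoeffding plus Borel--Cantelli to a positive lower density of escapes almost surely. Each approach buys something: the paper's decaying-probability trick needs no concentration inequality and proves shyness of the strictly larger and simpler set $\limsup_n E_n$, whereas yours makes condition (i) of transversality immediate (your measure is a probability measure carried by the compact cube $\{-1,0,1\}^{\mathbb{N}}$, while the paper never actually verifies condition (i) for its Gaussian product, where it requires a short extra tail estimate) and stays within elementary discrete measures. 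Both arguments are complete answers to Question 3.1.
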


\begin{proof} Let $E_n=[-\frac{1}{2}, \frac{1}{2}]_n \times \mathbf{R}^{\mathbb{N}  \setminus \{n\}}$, i.e.

\begin{align}
E_n=\{ (x_k)_{k \in \mathbb{N}}: x_n \in [-\frac{1}{2}, \frac{1}{2}]~\& ~x_k \in \mathbf{R}~\mbox{for}~ k \in \mathbb{N} \setminus \{n\}\}
\end{align}
Note that it is sufficient to show that $\limsup_{n\to\infty} E_n$ is a Borel shy set in $\mathbf{R}^\mathbb{N}$ because
\begin{align}
\cup_{J \in \mathbb{J}_{infinite}}R^J \times [-1/ 2,1/ 2]^{\mathbb{N} \setminus J} \subseteq \limsup_{n\to\infty} E_n.
\end{align}
Indeed, if $(x_k)_{k \in \mathbb{N}} \in \cup_{J \in \mathbb{J}}R^J \times [-1/ 2,1/ 2]^{\mathbb{N} \setminus J}$ then
there will be  a null-dense  subset $J_0$ such that $(x_k)_{k \in \mathbb{N}} \in R^{J_0} \times [-1/ 2,1/ 2]^{\mathbb{N} \setminus J_0}$.  Since
the density of $J_0$ in $\mathbb{N}$  is equal to zero we deduce that  $\mathbb{N} \setminus J_0$ is infinite. Then it is obvious that
\begin{align}
R^{J_0} \times [-1/ 2,1/ 2]^{\mathbb{N} \setminus J_0} \subseteq  \limsup_{n\to\infty} E_n.
\end{align}

For $m \in R, \sigma>0$,  we put:

\begin{enumerate}[\upshape (i)]
\item $\xi_{(m,\sigma)}$ is a Gaussian random variable on $R$ with parameters $(m,\sigma)$;
\item $\Phi_{(m,\sigma)}$ is a distribution function of  $\xi_{(m,\sigma)}$;
\item $\gamma_{(m,\sigma)}$ is a linear Gaussian measure on $R$ defined by $\Phi_{(m,\sigma)}$.
\end{enumerate}

For $n \in \mathbb{N}$, let $\mu_n$ be a linear Gaussian measure  $\gamma_{(0,\sigma_n)}$ such that
\begin{align}
\frac{1}{\sqrt{2 \pi} \sigma_n}\int_{-1/ 2}^{-1/ 2}e^{-\frac{t^2}{2\sigma_n^2}}dt\le \frac{1}{2^n}.
\end{align}
Such a measure always exists. Indeed, we can take under  $\mu_n$ such  a linear Gaussian measure  $\gamma_{(0,\sigma_n)}$ for which
$\sigma_n>\frac{2^n}{\sqrt{2\pi}}$ for each $n \in \mathbb{N}$.

Let us show that the product-measure $\prod_{n \in \mathbb{N}}\mu_n$ is a transverse to $\limsup_{n\to\infty} E_n$.

 We have to show that
\begin{align}
(\prod_{n \in \mathbb{N}}\mu_n)(\limsup_{n\to\infty} E_n +(h_n)_{n \in \mathbb{N}})=0
\end{align}
for each $(h_n)_{n \in \mathbb{N}} \in  R^\mathbb{N}$.
Note that
\begin{align}
\limsup_{n\to\infty}E_n+(h_n)_{n \in \mathbb{N}}=\limsup_{n\to\infty}E^{(h_n)}_n,
\end{align}
where
\begin{align}
E^{(h_n)}_n=\{ (x_k)_{k \in \mathbb{N}}: x_n \in [-\frac{1}{2}+h_n, \frac{1}{2}+h_n]~\& ~x_k \in \mathbf{R}~\mbox{for}~ k \in \mathbb{N} \setminus \{n\} \}.\end{align}

Note that
\begin{align}
(\prod_{n \in \mathbb{N}}\mu_n)(E^{(h_n)}_n) = \mu_n ([-\frac{1}{2}+h_n, \frac{1}{2}+h_n]) \le \mu_n ([-\frac{1}{2}, \frac{1}{2}]) \le \frac{1}{2^n}.
\end{align}
The latter relation guaranties that
\begin{align}
\sum_{n=1}^{\infty}(\prod_{n \in \mathbb{N}}\mu_n)(E^{(h_n)}_n)<\infty,
\end{align}
which according to  Borel-Canteli Lemma  implies that
\begin{align}
(\prod_{n \in \mathbb{N}} \mu_n)\limsup_{n \to \infty} E^{(h_n)}_n)=0.
\end{align}

Since $E_n$ is Borel measurable in $R^{N}$ for each $n \in N$, we deduce that $\limsup_{n\to\infty} E_n$ also is Borel measurable.
Now we claim that $\limsup_{n\to\infty} E_n$ is a Borel shy set because $\prod_{n \in \mathbb{N}}\mu_n$ is the measure transverse to $\limsup_{n\to\infty} E_n$. Finally, a set $\cup_{J \in \mathbb{J}} R^J \times [-1/ 2,1/ 2]^{\mathbb{N} \setminus J}$, as a subset of a  Borel shy set in $R^{N}$, by the Definition 2.3 is  shy  in  $R^{N}$.

This ends the proof of the lemma.

\end{proof}
The next proposition  is a simple consequence of Lemma 3.2 and the inclusion (3.3).

\begin{thm} The set $D$ of all real valued sequences uniformly distributed  in $[-1/ 2,1/ 2]$ is shy  in $R^N$.

\end{thm}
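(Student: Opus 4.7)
The plan is to obtain the theorem essentially for free by combining two facts that have already been established in the preceding development: the containment (3.3), namely
\[
D \subseteq \bigcup_{J \in \mathbb{J}} R^{J} \times \bigl[-\tfrac{1}{2},\tfrac{1}{2}\bigr]^{\mathbb{N} \setminus J},
\]
and Lemma 3.2, which asserts that the right-hand side is a shy subset of $R^{\mathbb{N}}$. Once these are in hand, the theorem is immediate from the second clause of Definition 2.3: a subset of a set that lies in a Borel shy set is itself shy.

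First I would briefly recall the argument for (3.3): if $(x_k)_{k \in \mathbb{N}} \in D$ and we set $J := \{k : x_k \notin [-\tfrac{1}{2},\tfrac{1}{2}]\}$, then the defining limit (applied to $[c,d] = [-\tfrac{1}{2},\tfrac{1}{2}]$) forces $J$ to have natural density zero in $\mathbb{N}$, i.e.\ $J \in \mathbb{J}$. Hence $(x_k)_{k \in \mathbb{N}}$ belongs to $R^{J} \times [-\tfrac{1}{2},\tfrac{1}{2}]^{\mathbb{N} \setminus J}$, establishing the inclusion. This step is already contained in the display chain leading up to (3.3), so in the proof I would simply cite it.

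Next I would invoke Lemma 3.2, according to which the set
\[
T := \bigcup_{J \in \mathbb{J}} R^{J} \times \bigl[-\tfrac{1}{2},\tfrac{1}{2}\bigr]^{\mathbb{N} \setminus J}
\]
is shy in $R^{\mathbb{N}}$. By the proof of Lemma 3.2, $T$ is actually contained in the Borel shy set $\limsup_{n \to \infty} E_n$ (since the finite-density contribution $\bigcup_{J \in \mathbb{J}_{\mathrm{finite}}}$ is itself contained in $\limsup_n E_n$ as well, because $\mathbb{N} \setminus J$ is infinite in both cases—if $J$ is finite, all sufficiently large indices $n$ lie in $\mathbb{N} \setminus J$). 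Consequently $D \subseteq T \subseteq \limsup_n E_n$, and $\limsup_n E_n$ is a Borel shy set, so $D$ is shy in $R^{\mathbb{N}}$ by Definition 2.3.

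There is no genuine obstacle here; the proof is a one-line consequence of Lemma 3.2 and the inclusion (3.3), and the only thing to be mildly careful about is that Definition 2.3 requires the containing set to be Borel and shy, a condition that was verified inside the proof of Lemma 3.2 when the measure $\prod_{n \in \mathbb{N}} \mu_n$ was shown to be transverse to the Borel set $\limsup_n E_n$.
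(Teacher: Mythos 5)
Your proposal is correct and follows exactly the route the paper intends: the paper derives the theorem in one line from the inclusion (3.3) and Lemma 3.2 via Definition 2.3, which is precisely your argument. Your added remark that both the finite and infinite parts of $\mathbb{J}$ land inside the Borel shy set $\limsup_{n\to\infty}E_n$ (since $\mathbb{N}\setminus J$ is infinite in either case) is a correct and slightly more explicit version of the check already made inside the paper's proof of Lemma 3.2.
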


\section{On structure of real valued sequences  uniformly distributed modulo 1 in $[-1/ 2,1/ 2]$
from the point of view of shyness}

\begin{defin}
A sequence of real
numbers $(x_k)_{k \in \mathbb{N}} \in \mathbf{R}^{\infty}$ is said to be uniformly distributed modulo 1 (abbreviated u.d. mod 1) if for each $c,d$ with $0 \le c
<d \le 1$  we have
\begin{align}
\lim_{n \to \infty}\frac{\#(\{ \{x_k\} : 1 \le k \le n\} \cap
[c,d])}{n}=d-c.
\end{align}
We denote by $E$ the set of all real valued sequences uniformly distributed modulo 1.

\end{defin}

\begin{defin}
A sequence of real
numbers $(x_k)_{k \in \mathbb{N}} \in \mathbf{R}^{\infty}$ is said to be uniformly distributed modulo 1 in  $[-1/2,1/2]$ (abbreviated u.d. mod 1 in $[-1/2,1/2]$) if for each $c,d$ with $-1/2 \le c
<d \le 1/2$  we have
\begin{align}
\lim_{n \to \infty}\frac{\#(\{ \{x_k\}-1/2 : 1 \le k \le n\} \cap
[c,d])}{n}=d-c.
\end{align}
We denote by $F$ the set of all real valued sequences uniformly distributed modulo 1.
\end{defin}

\begin{rem}Note that $(x_k)_{k \in \mathbb{N}}$ is uniformly distributed modulo 1 if and only if
$(x_k)_{k \in \mathbb{N}} $ is  uniformly distributed modulo 1 in  $[-1/2,1/2]$. Hence we have  that $E=F$.
\end{rem}

In the sequel we need the following lemma.

\begin{lem}(\cite{KuiNie74}, THEOREM 1.1, p. 2)  The sequence $(x_n)_{n \in N}$ of real numbers is
u.d. mod 1 if and only if for every real valued continuous function $f$ defined
on the closed unit interval $\overline{I}=[0, 1]$ we have
\begin{align}
\lim_{ N \to \infty}\frac{\sum_{n=1}^Nf(\{x_n\})}{N} =\int_{\overline{I}} f(x) dx.
\end{align}
\end{lem}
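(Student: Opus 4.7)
The plan is to prove the two implications of Weyl's criterion separately, in both cases using uniform approximation between continuous functions on $\overline{I}$ and indicator functions of subintervals of $\overline{I}$.

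For the forward direction, assume $(x_n)_{n\in\mathbb{N}}$ is u.d. mod $1$. First I would observe that the definition of u.d. mod $1$ is exactly the statement that
\begin{align*}
\lim_{N\to\infty}\frac{1}{N}\sum_{n=1}^{N}\chi_{[c,d]}(\{x_n\})=d-c=\int_{\overline{I}}\chi_{[c,d]}(x)\,dx
\end{align*}
for every subinterval $[c,d]\subseteq\overline{I}$. By linearity, the conclusion then holds for every step function of the form $g=\sum_{j=1}^{m}a_{j}\chi_{[c_{j},d_{j}]}$. Now given $f\in C(\overline{I})$ and $\varepsilon>0$, by uniform continuity I can choose a step function $g$ with $\|f-g\|_{\infty}<\varepsilon$. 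The triangle inequality
\begin{align*}
\Bigl|\tfrac{1}{N}\sum_{n=1}^{N}f(\{x_n\})-\int_{\overline{I}}f\,dx\Bigr|
\le 2\varepsilon+\Bigl|\tfrac{1}{N}\sum_{n=1}^{N}g(\{x_n\})-\int_{\overline{I}}g\,dx\Bigr|
\end{align*}
together with the already established convergence for $g$ then finishes this direction by letting $N\to\infty$ and then $\varepsilon\to 0$.

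For the reverse direction, assume the integral condition holds for every $f\in C(\overline{I})$. Given $[c,d]\subseteq\overline{I}$ and $\varepsilon>0$, the key step is to sandwich the indicator $\chi_{[c,d]}$ between two continuous piecewise-linear functions $g^{-}_{\varepsilon},g^{+}_{\varepsilon}\colon\overline{I}\to[0,1]$ with $g^{-}_{\varepsilon}\le\chi_{[c,d]}\le g^{+}_{\varepsilon}$ and $\int_{\overline{I}}(g^{+}_{\varepsilon}-g^{-}_{\varepsilon})\,dx<\varepsilon$ (take standard trapezoidal bumps of slope $1/\varepsilon$ near the endpoints $c$ and $d$). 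Since
\begin{align*}
\tfrac{1}{N}\sum_{n=1}^{N}g^{-}_{\varepsilon}(\{x_n\})\le
\tfrac{1}{N}\#\{1\le n\le N:\{x_n\}\in[c,d]\}\le
\tfrac{1}{N}\sum_{n=1}^{N}g^{+}_{\varepsilon}(\{x_n\}),
\end{align*}
applying the hypothesis to $g^{\pm}_{\varepsilon}$ and passing to the $\liminf$ and $\limsup$ yields
\begin{align*}
(d-c)-\varepsilon\le\liminf_{N\to\infty}\tfrac{1}{N}\#\{n\le N:\{x_n\}\in[c,d]\}
\le\limsup_{N\to\infty}\tfrac{1}{N}\#\{n\le N:\{x_n\}\in[c,d]\}\le(d-c)+\varepsilon.
\end{align*}
Letting $\varepsilon\to 0$ gives the required limit, which is the definition of u.d. mod $1$.

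The only mildly delicate point is the construction of the continuous sandwich $g^{\pm}_{\varepsilon}$ with control on both the pointwise order and the integral gap; this is however completely routine once one takes the standard piecewise-linear bumps, so I do not expect any real obstacle.
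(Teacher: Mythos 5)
The paper gives no proof of this lemma at all: it is quoted verbatim from Kuipers--Niederreiter (Theorem 1.1, p.~2), so there is nothing internal to compare against. Your argument is correct and is essentially the standard proof given in that reference --- step-function approximation for the forward implication and a continuous piecewise-linear sandwich of $\chi_{[c,d]}$ for the converse --- with the only routine point left implicit being that closed versus half-open subintervals (and hence the overlapping endpoints of your step functions) do not matter, since the empirical density of any single point $\{d\}$ is forced to be $0$ by the u.d.\ hypothesis itself.
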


\begin{thm}(SM) The set $E$ of all real valued sequences  uniformly distributed modulo 1  is prevalent set in $\mathbf{R}^{\infty}$.

\end{thm}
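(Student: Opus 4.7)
The strategy is to exhibit, in the Solovay model, a translation-invariant generator of shy sets whose null ideal already contains $\mathbf{R}^{\infty}\setminus E$, and then invoke Lemma~2.6. By Remark~4.3 it is enough to treat $E$. As a preliminary step I would check that $E$ is Borel: it is the countable intersection, over rational pairs $0\le c<d\le 1$, of the sets $\{(x_k):\lim_{n} n^{-1}\#\{k\le n:\{x_k\}\in[c,d]\}=d-c\}$, each of which is cut out by countably many conditions on Borel-measurable functions of $(x_k)$. Consequently $\mathbf{R}^{\infty}\setminus E$ is Borel, and it suffices to show it is shy.

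Next I would bring in the Solovay-model measure $\mu_{\mathbb{N}}$ from Lemma~2.9. Its restriction to the Borel $\sigma$-algebra of $\mathbf{R}^{\infty}$ is a quasi-finite (since $\mu_{\mathbb{N}}([0,1]^{\mathbb{N}})=1$) translation-invariant Borel measure on the Polish topological vector space $\mathbf{R}^{\infty}$, so by Lemma~2.6 it is a generator of shy sets. Hence it is enough to verify
\begin{align*}
\mu_{\mathbb{N}}(\mathbf{R}^{\infty}\setminus E) = \int_{\prod_{k\in\mathbb{N}}S_k} f_{\mathbf{R}^{\infty}\setminus E}(g)\,d\overline{\lambda_{\mathbb{N}}}(g) = 0.
\end{align*}
The crucial observation is that membership in $E$ for a preimage $(x_k)\in(\prod_{k}f_k)^{-1}(g)$ depends only on the fractional parts $\{x_k\}$, and these are uniquely determined by $g$ via the canonical lift $y_k\in[0,1)$ with $e^{2\pi i y_k}=g_k$. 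Letting $\widetilde{E}\subset\prod_k S_k$ be the set of $g$ whose canonical lift is u.d.\ mod~$1$, one gets $f_{\mathbf{R}^{\infty}\setminus E}(g)=0$ for $g\in\widetilde{E}$ and $f_{\mathbf{R}^{\infty}\setminus E}(g)=+\infty$ for $g\notin\widetilde{E}$ (since the fiber $\prod_{k}(y_k+\mathbb{Z})$ is infinite).

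Therefore the integral vanishes as soon as $\overline{\lambda_{\mathbb{N}}}\bigl(\prod_k S_k\setminus\widetilde{E}\bigr)=0$, and this is precisely Lemma~2.2 transported through the Borel isomorphism $[0,1)^{\infty}\to\prod_k S_k$, $(y_k)\mapsto(e^{2\pi i y_k})$, which pushes Lebesgue product measure $\lambda_{\infty}$ forward to Haar measure $\overline{\lambda_{\mathbb{N}}}$ and sends the set of sequences u.d.\ in $[0,1]$ onto $\widetilde{E}$ (using that on $[0,1)$ the notions of u.d.\ in $[0,1]$ and u.d.\ mod~$1$ coincide). The main obstacle I anticipate is the careful bookkeeping for the $\{0,+\infty\}$-valued function $f_{\mathbf{R}^{\infty}\setminus E}$: one must justify, under the extended-real convention $0\cdot\infty=0$, that integrating $+\infty$ against a $\overline{\lambda_{\mathbb{N}}}$-null set contributes zero, and confirm that the canonical-lift dichotomy honestly matches the definition of $f_E$ from Section~2. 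Once these points are settled, the theorem follows by combining Lemmas~2.2, 2.6, and~2.9.
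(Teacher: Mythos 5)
Your proposal is correct and follows essentially the same route as the paper: both reduce the theorem to showing $\mu_{\Bbb{N}}(\mathbf{R}^{\infty}\setminus E)=0$ via the fiber-counting definition of $\mu_{\Bbb{N}}$, both exploit the fact that a fiber $(\prod_k f_k)^{-1}(g)$ lies entirely inside $E$ or entirely inside its complement because u.d.\ mod $1$ depends only on the fractional parts determined by $g$, both import Lemma~2.2 to see that the bad fibers form a Haar-null set, and both conclude with Lemma~2.6. The only difference is bookkeeping: where the paper introduces the auxiliary set $E_0\subset(0,1)^{\infty}$ and the level sets $X_n$, $Y_n$ of the multiplicity function, you observe directly that $f_{\mathbf{R}^{\infty}\setminus E}$ takes only the values $0$ and $+\infty$ and is supported on a null set, which is a slightly cleaner presentation of the same argument.
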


\begin{proof} Let $E_0$ be the set of all sequences from $(0,1)^{\infty}$ which are not uniformly distributed in $[0,1]$. Since the measure $\lambda_{\infty}$ from Lemma 2.2 and the measure $\mu_{\mathbb{N}}$ from Lemma 2.9 coincides on subsets of $(0,1)^{\infty}$ in Solovay model, by   Lemma 2.2 we deduce that $\mu_{\mathbb{N}}(E_0)=0$.

By the definition of the functional $\mu_{\mathbb{N}}$ we have
\begin{align}
\mu_{\Bbb{N}}(E_0)= \int \limits_{\prod \limits_{k \in
\Bbb{N}}S_k}f_{E_0}(g)d \overline{\lambda_{\Bbb{N}}}(g))=0.
\end{align}

We put

\begin{align}
X_n=\{ g: g \in \prod \limits_{k \in \Bbb{N}}S_k ~\&~\mbox{card}(\prod\limits_{k \in \Bbb{N}}f_k)^{-1}(g) \cap E_0)=n\}
\end{align}
for $n \in \Bbb{N} \cup \{ +\infty\}$.
Then we get
\begin{align}
f_{E_0}(g)=\sum_{n \in \Bbb{N} \cup \{ +\infty\}}n \chi_{X_n}(g).
\end{align}

Since

\begin{align}
\mu_{\Bbb{N}}(E_0)=\sum_{n \in \Bbb{N} \cup \{ +\infty\}}n \overline{\lambda_{\Bbb{N}}}(X_n)=0
\end{align}
and

\begin{align}
\mbox{card}( (\prod\limits_{k \in \Bbb{N}}f_k)^{-1}(g) \cap E_0) \le 1
\end{align}
for each
$g \in \prod \limits_{k \in
\Bbb{N}}S_k$, we claim that
\begin{align}
\overline{\lambda_{\Bbb{N}}}(X_n)=0
\end{align}
for each $n \in (\Bbb{N} \setminus \{0\} ) \cup \{ +\infty\}$, which implies that
\begin{align}
\overline{\lambda_{\Bbb{N}}}(X_0)=1.
\end{align}

Now let $E^{*}$ be set of all sequences of real numbers which are not uniformly distributed modulo 1. Then
we get

\begin{align}
f_{E^{*}}(g)=\sum_{n \in \Bbb{N} \cup \{ +\infty\}}n \chi_{Y_n}(g)
\end{align}
where
\begin{align}
Y_n=\{ g: g \in \prod \limits_{k \in
\Bbb{N}}S_k ~\&~\mbox{card}((\prod\limits_{k \in \Bbb{N}}f_k)^{-1}(g) \cap E^{*})=n\}.
\end{align}
Let us show that $X_0 \subseteq Y_0$. Assume the contrary.  Then for some $g \in X_0$  and $n >0$ we get
\begin{align}
0=\mbox{card}((\prod\limits_{k \in \Bbb{N}}f_k)^{-1}(g) \cap E_0) < \mbox{card}((\prod\limits_{k \in \Bbb{N}}f_k)^{-1}(g) \cap E^*)=n,
\end{align}
which implies an existence of such a sequence $(x_k)_{k \in \Bbb{N}} \in (\prod\limits_{k \in \Bbb{N}}f_k)^{-1}(g) \cap E^*$ for which
\begin{align}
(\{x_k\})_{k \in \Bbb{N}} \in (\prod\limits_{k \in \Bbb{N}}f_k)^{-1}(g) \cap E^*.\end{align}
Then we get also that
\begin{align}
(\{x_k\})_{k \in \Bbb{N}} \in (\prod\limits_{k \in \Bbb{N}}f_k)^{-1}(g) \cap E_0\end{align}
which is the contradiction and we proved that $X_0 \subseteq Y_0$.

Since $X_0 \subseteq Y_0$ and $\overline{\lambda_{\Bbb{N}}}(X_0)=1$, we claim that $\overline{\lambda_{\Bbb{N}}}(Y_0)=1.$
The latter relation implies that $\overline{\lambda_{\Bbb{N}}}(Y_n)=0$ for each $n \in (\Bbb{N} \setminus \{0\} ) \cup \{ +\infty\}$.
Finally we get
\begin{align}
\mu_{\Bbb{N}}(E^{*})=\sum_{n \in \Bbb{N} \cup \{ +\infty\}}n \overline{\lambda_{\Bbb{N}}}(Y_n)=0.
\end{align}
Since $\mu_{\Bbb{N}}$ is the completion of a quasi-finite translation-invariant Borel measure in $R^{\Bbb{N}}$, by Lemma 2.6 we easily deduce that
  $\mu_{\Bbb{N}}$ is the generator of shy sets in $R^{\Bbb{N}}$ which implies that $E^{*}$ is shy. The latter relation implies that the set  $R^{\Bbb{N}} \setminus E^{*}$, being the set $E$  of all real valued sequences uniformly distributed modulo $1$, is prevalent set in $\mathbf{R}^{\infty}$.

This ends the proof of the theorem.

\end{proof}

By using Remark 4.4, we get the following corollary of Theorem 4.5.

\begin{cor}(SM) The set $F$ of all real valued sequences, uniformly distributed modulo 1 in  $[-1/2,1/2]$,   is prevalent set in $\mathbf{R}^{\infty}$.

\end{cor}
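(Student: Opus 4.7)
The plan is to reduce the corollary directly to Theorem 4.5 by justifying the identification $E=F$ recorded in Remark 4.4. Once that set equality is verified, the prevalence of $F$ is immediate from the prevalence of $E$, since prevalence depends only on the underlying set in $\mathbf{R}^{\infty}$.

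The only substantive step is the verification of Remark 4.4, which is a simple change-of-variables argument. Given any sequence $(x_k)_{k\in\mathbb{N}}\in\mathbf{R}^{\infty}$, I would observe that the map $\tau:[0,1]\to[-1/2,1/2]$ defined by $\tau(t)=t-1/2$ is a length-preserving bijection that restricts to a bijection between subintervals $[c',d']\subseteq[0,1]$ and subintervals $[c,d]\subseteq[-1/2,1/2]$ via $c=c'-1/2,\ d=d'-1/2$, with $d-c=d'-c'$. Since $\{x_k\}-1/2\in[c,d]$ if and only if $\{x_k\}\in[c+1/2,d+1/2]$, the counting quantities $\#(\{\{x_k\}-1/2:1\le k\le n\}\cap[c,d])$ and $\#(\{\{x_k\}:1\le k\le n\}\cap[c+1/2,d+1/2])$ coincide. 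Dividing by $n$ and passing to the limit shows that the condition of Definition 4.2 at $(c,d)$ is exactly the condition of Definition 4.1 at $(c+1/2,d+1/2)$, and conversely. Hence the two definitions describe the same set of sequences, i.e.\ $E=F$.

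With Remark 4.4 justified, I would conclude by invoking Theorem 4.5: in the Solovay Model, $E$ is prevalent in $\mathbf{R}^{\infty}$. Since $F=E$, the set $F$ itself is prevalent in $\mathbf{R}^{\infty}$, which is the content of Corollary 4.6.

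There is essentially no obstacle in this argument; the real mathematical content is already encapsulated in Theorem 4.5, and the corollary is purely a cosmetic reparametrization. The only point requiring minor care is to keep the endpoint conventions consistent (both definitions use half-open style counts on closed intervals, and the translation $t\mapsto t-1/2$ preserves this bookkeeping), but this is a routine check rather than a genuine difficulty.
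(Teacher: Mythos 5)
Your proposal is correct and follows exactly the paper's route: the paper deduces Corollary 4.6 from Theorem 4.5 by invoking the identification $E=F$ of Remark 4.4, which you verify explicitly via the translation $t\mapsto t-1/2$. Your spelled-out change-of-variables check is just the content the paper leaves implicit in the remark, so there is nothing to add.
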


By using Lemma 4.4 and Theorem 4.5 we get the following versions of the strong law of large numbers in terms of prevalent set.

\begin{cor}(SM)  Let $f$ be a real valued continuous function  defined
on the closed unit interval $\overline{I}=[0, 1]$. Then
\begin{align}
\{ (x_n)_{n \in \mathbb{N}} \in  R^{\mathbb{N}} ~:~ \lim_{ N \to \infty}\frac{\sum_{n=1}^Nf(\{x_n\})}{N} =\int_{\overline{I}} f(x) dx \}
\end{align}
is prevalent set in $R^{\mathbb{N}}$.
\end{cor}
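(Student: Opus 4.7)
The plan is to deduce the corollary directly from Theorem 4.5 together with Lemma 4.4, treating the asserted set as a superset of the prevalent set $E$ of all sequences uniformly distributed modulo $1$. Fix once and for all a continuous function $f : [0,1] \to \mathbb{R}$, and denote by $A_f$ the set appearing in the corollary, namely
\begin{align*}
A_f = \Bigl\{ (x_n)_{n \in \mathbb{N}} \in \mathbb{R}^{\mathbb{N}} : \lim_{N \to \infty} \frac{1}{N}\sum_{n=1}^{N} f(\{x_n\}) = \int_{0}^{1} f(x)\, dx \Bigr\}.
\end{align*}

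The first step is to establish the inclusion $E \subseteq A_f$. This is immediate from Lemma 4.4 (the Weyl-type criterion recalled from Kuipers--Niederreiter): if $(x_n)_{n \in \mathbb{N}}$ is u.d. mod $1$, then for \emph{every} continuous $f$ on $[0,1]$ the Cesàro averages $\frac{1}{N}\sum_{n=1}^{N} f(\{x_n\})$ converge to $\int_{0}^{1} f(x)\, dx$. In particular this holds for the specific $f$ we fixed, so every element of $E$ belongs to $A_f$.

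The second step is to transfer prevalence from $E$ to $A_f$. By Theorem 4.5 (valid in the Solovay model), $E$ is prevalent in $\mathbb{R}^{\mathbb{N}}$; equivalently, its complement $\mathbb{R}^{\mathbb{N}} \setminus E$ is shy. Since $E \subseteq A_f$, we have
\begin{align*}
\mathbb{R}^{\mathbb{N}} \setminus A_f \;\subseteq\; \mathbb{R}^{\mathbb{N}} \setminus E .
\end{align*}
By Definition 2.3 a subset of a shy set is again shy, hence $\mathbb{R}^{\mathbb{N}} \setminus A_f$ is shy, i.e.\ $A_f$ is prevalent. This completes the argument.

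I do not anticipate any real obstacle: the only non-trivial content (that $E$ is prevalent in the Solovay model, and the equivalence between uniform distribution mod $1$ and convergence of continuous Cesàro averages) is already packaged as Theorem 4.5 and Lemma 4.4. The sole point one must be careful about is that shyness of the complement is preserved under passing to a larger set, which is built into the definition via the clause ``a subset of $\mathbb{R}^{\mathbb{N}}$ is called shy if it is contained in a Borel shy set,'' so no additional measurability verification for $A_f$ itself is needed.
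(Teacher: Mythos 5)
Your proposal is correct and follows essentially the same route the paper takes: it deduces the corollary from the inclusion $E \subseteq A_f$ supplied by Lemma 4.4 together with the prevalence of $E$ from Theorem 4.5, exactly as the paper does (explicitly for the intersection version in Corollary 4.8 and implicitly for this one). Your added remark that no Borel measurability of $A_f$ need be checked, since shyness of the complement passes to subsets by definition, is a worthwhile clarification but not a departure from the paper's argument.
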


\begin{cor}(SM)  The set
\begin{align}
\cap _{ f \in  C[0,1]} \{ (x_n)_{n \in \mathbb{N}} \in  R^{\mathbb{N}} ~:~ \lim_{ N \to \infty}\frac{\sum_{n=1}^Nf(\{x_n\})}{N} =\int_{\overline{I}} f(x) dx \}
\end{align}
is prevalent set in $R^{\mathbb{N}}$.
\end{cor}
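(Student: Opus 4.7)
The plan is to recognize that Corollary 4.8 is essentially a reformulation of Theorem 4.5 via the Weyl criterion (Lemma 4.4), so that the bulk of the work has already been done and the remaining task is just to identify two sets.

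First, I would invoke Lemma 4.4 in both directions. For a fixed sequence $(x_n)_{n \in \mathbb{N}}$, that lemma asserts the equivalence between being u.d.\ mod $1$ and having $\lim_{N \to \infty} \frac{1}{N}\sum_{n=1}^{N} f(\{x_n\}) = \int_{\overline{I}} f(x)\,dx$ for every continuous $f : [0,1] \to \mathbb{R}$. Rewriting this equivalence in set-theoretic terms yields
\begin{align}
E = \bigcap_{f \in C[0,1]} \Bigl\{ (x_n)_{n \in \mathbb{N}} \in \mathbf{R}^{\mathbb{N}} : \lim_{N \to \infty} \frac{\sum_{n=1}^{N} f(\{x_n\})}{N} = \int_{\overline{I}} f(x)\,dx \Bigr\}.
\end{align}

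Second, I would apply Theorem 4.5, which asserts (in the Solovay model) that $E$ is a prevalent set in $\mathbf{R}^{\infty}$. Combining this with the equality displayed above completes the proof.

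The only potential obstacle worth noting is the standard subtlety that an arbitrary intersection of prevalent sets need not be prevalent, so one cannot conclude Corollary 4.8 directly from Corollary 4.7 by intersecting over $f \in C[0,1]$. The Weyl criterion circumvents this issue entirely by collapsing the uncountable intersection into a single, already known prevalent set $E$. Thus the argument is essentially a one-line deduction from Lemma 4.4 and Theorem 4.5.
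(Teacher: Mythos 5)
Your proposal is correct and follows essentially the same route as the paper: both reduce the statement to Theorem 4.5 via Lemma 4.4 (the Weyl criterion). The only cosmetic difference is that the paper uses just the inclusion $E \subseteq \bigcap_{f}\{\cdots\}$ (a superset of a prevalent set being prevalent), whereas you establish the full equality; your remark about uncountable intersections of prevalent sets is a sensible observation but not needed for the argument.
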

\begin{proof} By Lemma 4.4, we know  that
\begin{align}
E \subseteq \{ (x_n)_{n \in \mathbb{N}} \in  R^{\mathbb{N}} ~:~ \lim_{ N \to \infty}\frac{\sum_{n=1}^Nf(\{x_n\})}{N} =\int_{\overline{I}} f(x) dx \}
\end{align}
for each $f \in  C[0,1]$.  The latter relation implies that 
\begin{align}
E \subseteq \cap _{ f \in  C[0,1]} \{ (x_n)_{n \in \mathbb{N}} \in  R^{\mathbb{N}} ~:~ \lim_{ N \to \infty}\frac{\sum_{n=1}^Nf(\{x_n\})}{N} =\int_{\overline{I}} f(x) dx \}.
\end{align}
Application of the result of Theorem 4.5  ends the proof of the corollary.
\end{proof}

\medskip
\noindent {\bf Acknowledgment}. The authors wish to thank the referees for their constructive critique of the first draft.
 The research for this paper  was partially supported by Shota
Rustaveli National Science Foundation's Grant no 31/25.

\end{document}